\newcommand{\ZZ}{{\mathbb Z}}
\newcommand{\RR}{{\mathbb R}}
\newcommand{\lk}{\operatorname{lk}}
\renewcommand{\phi}{\varphi}
\newtheorem{theorem}{Theorem} 
\newtheorem{lemma}[theorem]{Lemma}
\newtheorem{proposition}[theorem]{Proposition}
\theoremstyle{definition}
\theoremstyle{remark}
\newtheorem*{acknowledgments}{Acknowledgments}
\title{Classification of virtual string links up to cobordism}
\author[R. Gaudreau]{Robin Gaudreau}
\address{Mathematics, University of Toronto, Toronto, Ontario}
\email{gaudreau@tutamail.com}
\keywords{Virtual links, string links, cobordism, concordance, welded knots.}
\date{\today}  
\begin{document}

\maketitle

\begin{abstract} Cobordism of virtual string links on $n$ strands is a combinatorial generalization of link cobordism. There exists a bijection between virtual string links up to cobordisms and elements of the group $\ZZ^{n(n-1)}$. This paper also shows that virtual string links up to unwelded equivalence are classified by those groups. Finally, the related theory of welded string link cobordism is defined herein and shown to be trivial for string links with one component (Theorem \ref{new}).
\end{abstract}

\hspace{2em}

\section*{Introduction}
Virtual knot theory, as understood from \cite{Ka1999}, is a combinatorial extension of classical knot theory. When picturing oriented knots as diagrams in the plane, crossings are vertices of a planar, oriented, tetra-valent graph with a cyclic orientation of the edges and a distinguished over-crossing pair. Removing the planarity requirement on such graphs yields virtual knot diagrams, whose equivalence classes up to the appropriate (generalized) Reidemeister moves are called virtual knots. Similarly, by understanding a classical string link as an equivalence class of diagrams, one defines a virtual string link. The goal of this paper is to relate three generalizations of concepts from classical knot theory through the following result:

\begin{theorem}
Let $L_1$ and $L_2$ be virtual string link diagrams. Then, the following are equivalent:

\begin{enumerate}
\item $L_1$ is cobordant to $L_2$,
\item $L_1$ is unwelded equivalent to $L_2$,
\item The pairwise virtual linking number of the components of $L_1$ equal those of $L_2$.
\end{enumerate}
\end{theorem}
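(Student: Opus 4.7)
The plan is to prove the two equivalences (1) $\Leftrightarrow$ (3) and (2) $\Leftrightarrow$ (3), which together yield the theorem. The forward directions are invariance checks; the harder reverse directions rely on a canonical-form reduction.

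First I would verify (1) $\Rightarrow$ (3) and (2) $\Rightarrow$ (3) by showing each virtual linking number $\vlk(L_i,L_j)$ is preserved by the moves generating each equivalence. For the generalized Reidemeister moves this is standard. For a saddle, birth, or death cobordism the local modification can be taken disjoint from any classical crossing between distinct strands, so the mixed crossings contributing to $\vlk$ are untouched. For the forbidden moves F1 and F2 defining unwelded equivalence, one checks directly that at each of the two classical crossings involved the over-strand/under-strand assignment and the sign persist, so the signed count is invariant.

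For the reverse directions, I would fix for every vector $\ell \in \ZZ^{n(n-1)}$ a canonical diagram $L(\ell)$ consisting of $n$ parallel strands with, for each ordered pair $(i,j)$, a standard block of $|\ell_{ij}|$ classical crossings of sign $\sign(\ell_{ij})$ in which strand $i$ overpasses strand $j$; then $\vlk(L(\ell)_i,L(\ell)_j)=\ell_{ij}$ by construction. It therefore suffices to prove that any diagram $L$ with linking-number vector $\ell$ is cobordant, respectively unwelded-equivalent, to $L(\ell)$.

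I would carry out both reductions in Gauss-diagram (arrow-diagram) language. Every self-crossing arrow of a strand can be removed: by a saddle followed by a death on the resulting small loop for cobordism, and by an analogue of the ``virtualization'' move implemented via F1 and F2 for unwelded equivalence. Then the forbidden moves (or suitable saddles together with Reidemeister moves) permit any two mixed arrows on a common strand to be reordered. After sorting arrows along each strand, opposite-sign pairs between the same ordered pair cancel by a Reidemeister II move, and the result is exactly $L(\ell)$. The main obstacle will be the unwelded reduction: proving that F1 and F2 together allow arbitrary commutation of arrows and elimination of self-crossings, a careful case analysis in which the availability of both forbidden moves is essential, since F1 alone gives welded equivalence, which is strictly finer than what is needed here.
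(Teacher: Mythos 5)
Your proposal follows essentially the same route as the paper: invariance of the ordered pairwise linking numbers under cobordism and forbidden moves for the easy directions, and reduction of an arbitrary Gauss diagram to a canonical form determined by the linking-number vector (commuting arrow endpoints via saddles, respectively via F1/F2 following Nelson, then deleting self-crossings and cancelling opposite-sign pairs) for the converse. The key technical point you flag --- that both forbidden moves are needed to commute arbitrary adjacent endpoints, handled by a case analysis on signs and orientations --- is exactly the content of the paper's Section 3.
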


The first classification, string link cobordism, is a generalization of the notion of virtual link cobordism introduced by Carter, Kamada, and Saito in \cite{CKS2002}. It relies on an interpretation of virtually knotted objects as curves in thickened surfaces, but yields the same theory as the one that is exposed below.

The second classification has been studied under many other names, notably as \emph{fused isotopy} and the equivalence between statements (2) and (3) is a generalization of Theorem 2 of \cite{FK2007} and of Theorem 8 in \cite{Ok2005}. Unfortunately, unwelded equivalence lacks an intrinsic topological interpretation.

The paper is structured as follows: relevant definitions are given in Section \ref{background}, results and topological notions needed to prove the main theorem appear in Section \ref{known}, followed by its proof in Section \ref{proof}. Finally, Section \ref{welded} is contains partial results on welded string links up to concordance. 

\section{Vocabulary} \label{background}

From now on, fix $n\ge 1$ to be an integer, and let $I=[0,1]$ denote the closed unit interval. 

\subsection{Virtual string links}

Classical string links were defined in \cite{HL1990} as a \emph{self-concordance} of $n$ points in $D^2\times I$, where $D^2$ is the closed unit disk in the plane. This abstract and succinct definition contains all the details needed to understand these objects, but it does not allow a straightforward generalization to virtual string links. 

Following the approach to virtual knot theory from \cite{Ka1999}, let a virtual string link diagram be a diagram consisting of $n$ smooth curves, oriented from $(\frac{i}{n+1}, 0)$ to $(\frac{i}{n+1}, 1)$, with $i=1, 2, \ldots n$, such that singularities are at most a finite number of transverse double points, decorated in one of the ways depicted in Figure \ref{crossings}. The classical Reidemeister moves, as shown in Figure \ref{GD} can be applied to string link diagrams and generate the expected equivalence classes. 

Therefore, virtual string links, to be generalizations of virtual pure braids need to be defined combinatorially. While one could do this process using any knot presentation, the following will only use planar and Gauss diagrams. \emph{Virtual string links} are then the equivalence class generated by such a diagram, up to the extended Reidemeister moves from \cite{Ka1999} and planar isotopies.

Given such a planar diagram, one can create its associated Gauss diagram by drawing the $n$ intervals and connecting the pre-images of a classical crossing by an arrow oriented from the overcrossing component to the undercrossing one, decorated by signs using the convention shown in Figure \ref{crossings}. The writhe function of a crossing $c$, $w(c)$, takes value $+1$ or $-1$ if $c$ is positive or negative respectively. The writhe is not defined for virtual crossings.

Alternatively, a virtual string link Gauss diagram can be constructed abstractly, by drawing a finite number of signed oriented chords with distinct endpoints on the interior of $n$ intervals. As with virtual knots and links, it is immediate that any such Gauss diagram can be realized as a virtual string link planar diagram. Gauss diagrams admit their own version of Reidemeister moves, which are the same for classical and virtual $SL_1$ since Reidemeister moves which involve virtual crossings leave the Gauss diagram unchanged. 

In this paper, ``Reidemeister moves'' is used to mean simultaneously the classical moves on planar diagrams, their extended version, and the analogous moves on Gauss diagrams. Figure \ref{GD} shows the equivalence between the two approaches, and therefore a virtual string link can be defined strictly from the Gauss diagrams.

\begin{figure}[ht] 
\centering
\includegraphics[scale=0.7]{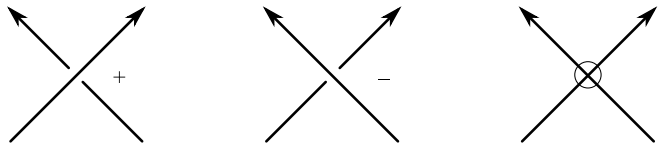}  
\caption{Positive, negative, and virtual crossings in planar diagrams.}  \label{crossings}
\end{figure}

\subsection{Notation}

Following \cite{BBD2015}, the set of classical string links on $n$ strands is denoted $uSL_n$. Its virtual extension is $vSL_n$, while the welded version, $vSL_n/(f1)$ is $wSL_n$. Finally, $vSL_n/(f1,f2)=:uwSL_n$ are unwelded string links on $n$ strands. The moves $f1$ and $f2$ are discussed in more details in Section \ref{forb}.

\hfill 

\begin{figure}[ht] 
\centering
\includegraphics[scale=0.6]{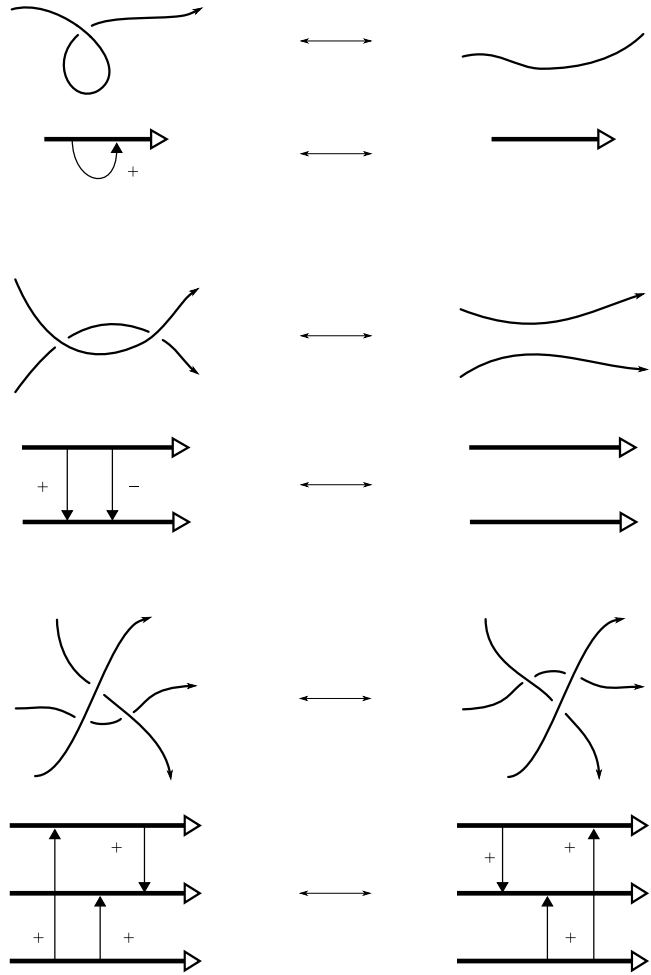}  
\caption{Reidemeister moves on planar and Gauss diagrams.}  \label{GD}
\end{figure}

Each of those sets is closed under an ordered, binary operation, the connected sum. It is written using the $\#$ operator, following the notation for $uSL_1$, which corresponds to classical long knots, and to carry on the analogy, diagrams are drawn such that the strands connect vertical intervals from left to right. Given two string link diagrams on $n$ strands, $D_1$ and $D_2$, their connected sum $D_1 \# D_2$ is represented a diagram obtained by connected the end of the $i$th strand of $D_1$ to the beginning of the $i$th strand of $D_2$. For Gauss diagrams, the connected sum is also represented by concatenation, as seen in Figure \ref{inverse}, this time joining the pre-images of the strands together.

Because moves can be applied to each part of $D_1 \# D_2$ independently, the result of a connected sum is independent of the choice of diagrams. Moreover, the operation is associative, thus makes the sets of string links into monoids.

\begin{figure}[htbp] 
\centering
\includegraphics[scale=0.7]{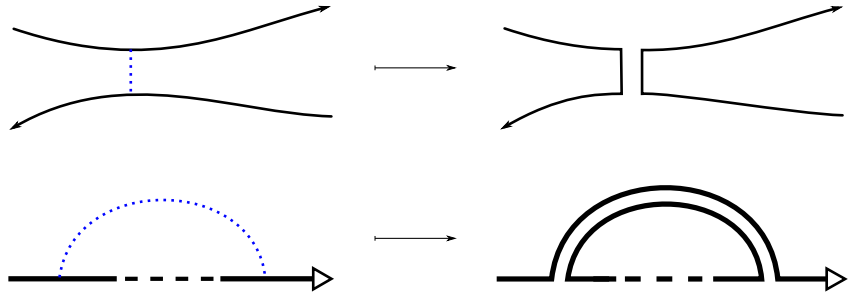}  
\caption{The saddle move on a planar and on a Gauss diagram.} \label{saddle}
\end{figure}

\subsection{Virtual cobordims}

A cobordism between two virtual knot diagrams $K_0$ and $K_1$ is a finite sequence of Reidemeister moves, births and deaths of unknotted components, and oriented saddle moves, as pictured in Figure \ref{saddle}. Diagrammatic cobordism are generalized to string links from \cite{CKS2002}, with the added restriction that the abstract surface with corners described by the cobordism of an $n$-component virtual string link must have precisely $n$ connected components. This corresponds to the topological restriction that is imposed on link cobordisms. As with other cases of cobordism, the genus can be computed by using the formula $$(s_i-b_i+d_i)/2,$$ where $s_i$ is the number of saddle moves in the cobordism that involve the $i$th component, $b_i$ the number of birthed unknots that get saddled to it and $d_i$ the number of deaths related to the component. 

\hfill 

\begin{figure}[htbp] \centering
\includegraphics[scale=0.75]{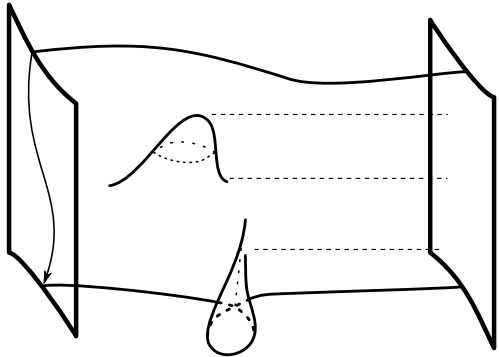}  
\caption{Concordance between a standard long unknot and one with a kink.} \label{RM1cob}
\end{figure}

Given a virtual string link diagram $D$ on $n$ strands, the cobordism class it generates is $\mathcal B(D)$, and the set of all such classes is $vSL_n\mathcal B$. Similarly, the restriction to classical diagrams is $uSL_n\mathcal B$. 

For classical knots, a cobordism between $K_0, K_1\subset \RR^3$ is called a concordance if it is realized by an annulus $S^1 \times [0,1]\subset \RR^3\times I$ where its boundary component $S^1\times \{i\}$ represents $K_i$.  For long knots, the cobording surface is $\RR\times [0,1]$, and a simple truncated example is in Figure \ref{RM1cob}. The set of diagrams that are concordant to some classical string link $D$ is $\mathcal C(D)$, an element $uSL_n\mathcal C$, the $n$-strand classical string link concordance group (with the inverse of an planar diagram being its vertical mirror image). Using the abstract definition of genus above, a \emph{concordance} between two virtual string link diagrams on $n$ strands consists of a series of extended Reidemeister moves, births, deaths and saddles, such that the genus of the cobordism on each component is 0. The quotient by concordance of $vSL_n$ is denoted $vSL_n\mathcal C$, and called the $n$-strand virtual string link concordance group (see Proposition \ref{invprop}). As with cobordism, all quotients of $vSL_n$ can be factored by concordance equivalence, and there are many questions about the maps between those groups. It is known from \cite{BN2017} that $uSL_1\mathcal C$ embeds in $vSL_1\mathcal C$, but it is an open problem whether this continues to hold for $n>1$. i.e. is the natural map $uSL_n\mathcal C \to vSL_n\mathcal C$ one-to-one?

While round classical knots up to concordance form a group, round virtual knots do not have a well-defined connected sum, hence the appropriate virtual concordance group uses long virtual knots and agrees with $vSL_1\mathcal C$. This motivates the study of the problem above.

\subsection{Forbidden moves} \label{forb}
On planar diagrams, forbidden moves are the tempting operations that appear similar to a third Reidemeister move and would allow a strand to slide either over ($f1$) or under ($f2$) a virtual crossing. On Gauss diagrams, the difference between those operations and the other moves is more evident as the forbidden moves allow certain arrow endpoints to commute without compensating for it elsewhere in the link, as depicted in Figure \ref{forbidden}. 

\hfill

\begin{figure}[htbp] \centering
\includegraphics[scale=0.6]{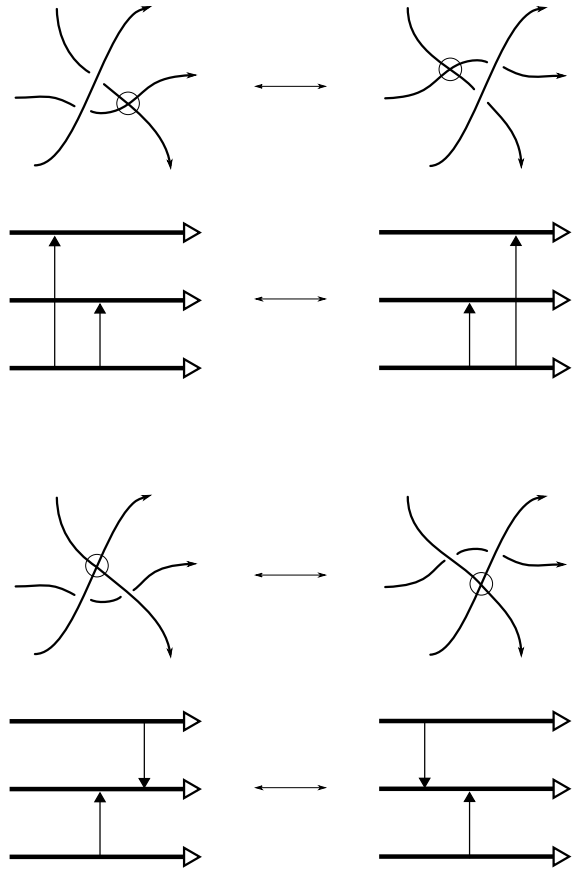}  
\caption{Forbidden moves of planar and Gauss diagrams.} \label{forbidden}
\end{figure}

The first forbidden move has appeared, and been allowed, in the literature long before virtual knot theory was ever popular. Keeping with the notation introduced in \cite{FRR1997}, the objects defined up to Reidemeister moves and the first forbidden move are called \emph{welded}. That paper is focused on welded braids, and proves that the welded pure braid groups are not trivial, and distinct from the classical pure braid groups. Further allowing the second forbidden move yields \emph{unwelded} objects. In particular, all knots are trivial as unwelded knots, as shown in \cite{Ne2001} and references therein. The main theorem of this paper is proved in Section \ref{sam} following Nelson's approach.

Let $vSL_n\mathcal B$ denote the monoid whose elements are equivalence classes of virtual string link diagrams on $n$ components up to cobordisms and whose operation is concatenation. The monoids $wSL_n\mathcal B$ and $uwSL_n\mathcal B$ are defined similarly by allowing one and both forbidden moves respectively. 

\section{Fundamental results} \label{known}

The classical linking between two components of a classical link was first defined as an integral over the paths of a representative of the link and it admits combinatorial formulas that compute it from a planar or Gauss diagram. Using the un-normalization version, $$\operatorname{ulk}(L_{(1)},L_{(2)})=\sum_{c\in L_{(1)}\cap L_{(2)}}\operatorname{w}(c),$$ where $L_{(i)}$ are components of a link $L$, $c$ a crossing, an $\operatorname{w}(c)$ the sign of $c$.

If $L$ is a classical knot, then this ``usual'' linking number is even, and often normalized by multiplication by a factor of $\frac{1}{2}$. For virtual link, the symmetry that this relies on needs not hold and the \emph{ordered} linking numbers are different. Let $$\operatorname{lk}(L_{(1)},L_{(2)})=\sum_{c : L_{(1)} \to L_{(2)}} \operatorname{w}(c)$$ be the linking number of $L_{(1)}$ over $L_{(2)}$, that is, the sum of the writhes of the crossings where $L_{(1)}$ goes over $L_{(2)}$. The notation $L_{(1)}\to L_{(2)}$ reflects that the arrows that are counted in the Gauss diagram point from $L_{(1)}$ to $L_{(2)}$. Then, $\operatorname{ulk}(L_{(1)},L_{(2)})= \operatorname{ulk}(L_{(1)},L_{(2)})=\operatorname{lk}(L_{(1)},L_{(2)})+ \operatorname{lk}(L_{(2)},L_{(1)})$. 

These definitions can be used verbatim for components of virtual string links.

\begin{figure}[htbp] 
\centering\includegraphics[scale=0.7]{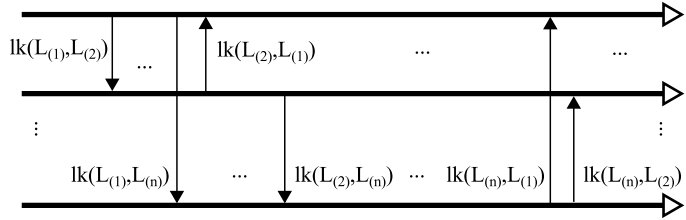}  
\caption{The standard form of an unwelded string link.} \label{standard}
\end{figure} 

\begin{lemma} \label{firstlem}
The linking numbers between components of a virtual link or virtual string link are invariant under the forbidden moves and cobordisms.
\end{lemma}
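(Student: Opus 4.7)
The plan is to verify invariance move-by-move, since the linking number $\lk(L_{(1)},L_{(2)})$ is a signed count of the classical arrows in the Gauss diagram that travel from $L_{(1)}$ to $L_{(2)}$. For the Reidemeister moves, $R1$ only creates or removes a self-arrow on a single component and therefore contributes nothing to any cross-component count. $R2$ creates or destroys a pair of arrows of opposite signs that both connect the same ordered pair of components in the same direction, so they cancel in the defining sum. $R3$ and the extended and mixed variants permute crossings without altering their signs, their over/under relation, or the components of their endpoints, and the purely virtual moves do not touch classical arrows in the Gauss diagram at all.

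For births and deaths of unknotted components, no arrows are introduced or removed, so every linking number is trivially preserved. For a saddle move no classical crossings are created or destroyed either; the only thing that could conceivably change is which component an arrow endpoint lies on. Here the $n$-component constraint on the cobordism surface is crucial: because the abstract surface has exactly one connected piece per strand, each arrow endpoint stays attached to the same labelled strand throughout the sequence of saddles, and any ancillary circles produced by births, merged in by saddles, and later killed by deaths carry no arrow endpoint that ends up migrating to the wrong strand. Thus the bipartite assignment of arrows to ordered pairs of components is preserved, and so is every $\lk(L_{(i)},L_{(j)})$.

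For the forbidden moves $f1$ and $f2$, the Gauss-diagram picture is that two adjacent endpoints on a strand commute: $f1$ swaps two arrow tails and $f2$ swaps two arrow heads. Neither operation adds or removes an arrow, modifies its sign, switches its over/under designation, or changes the strand on which its endpoints lie, so the sum defining $\lk(L_{(i)},L_{(j)})$ is untouched. The same analysis applies verbatim to ordinary virtual links, and summing the two orderings gives the corresponding statement for $\operatorname{ulk}$.

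The hard part is the saddle case, because it is the only operation in the cobordism that rearranges how strands are connected; every other move is visibly a local arrow-preserving change. The argument for saddles is really the observation that the $n$-component constraint on the cobording surface prevents arrow endpoints from being reassigned to a different strand across any valid sequence of births, saddles, and deaths, which is what makes the count depend only on the cobordism class.
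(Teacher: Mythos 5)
Your proposal is correct and follows essentially the same route as the paper: a move-by-move verification over Reidemeister moves, forbidden moves, and births/deaths/saddles, with the decisive observation in both arguments being that the $n$-component restriction on the cobordism lets endpoints of arrows be reordered but never reassigned to a different strand. Your treatment is somewhat more explicit than the paper's (which disposes of the saddle case in one sentence), but no new idea is involved.
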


\begin{proof}
Let $L$ and $L'$ differ by a single forbidden move. Let $L_{(1)}$ and $L_{(2)}$ be components of $L$, and $L'_{(i)}$, $i=1,2$, be the corresponding components in $L'$. Since forbidden moves change neither the number nor the sign of arrows between any two components, $\lk(L_{(1)}, L_{(2)})= \lk(L'_{(1)},L'_{(2)})$. For cobordisms, first notice that the corresponding claim also holds for the first and third Reidemeister moves. For the second Reidemeister move, assume that $L'$ is obtained from $L$ by canceling a pair of arrows from $L_{(1)}$ to $L_{(2)}$. Then, those arrows contribute $+1$ and $-1$ respectively to $\lk(L_{(1)}, L_{(2)})$ and therefore $\lk(L_{(1)}, L_{(2)})= \lk(L'_{(1)}, L'_{(2)})$. Finally, the restrictions on the death, birth, and saddle moves make it so that  the order of the endpoints of arrows can be changed, but the component on which they lie is preserved. Therefore, pairwise linking numbers are invariants of $uwSL_n$ and $vSL_n\mathcal B$.
\end{proof}

\hfill

\begin{figure}[htbp] \centering
\includegraphics[scale=0.75]{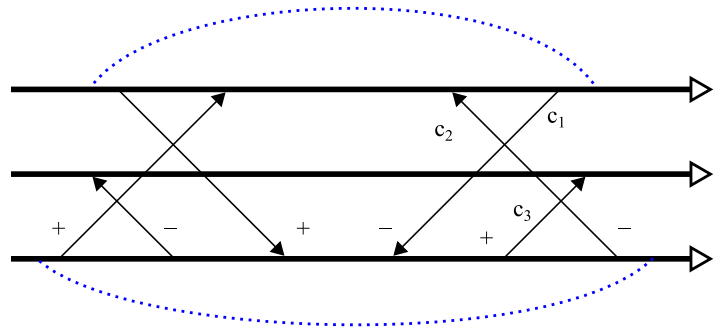}  
\caption{Connected sum of a virtual string link diagram and its concordance inverse. Dashed curves show the saddle moves needed to trivialize it.} \label{inverse}
\end{figure}

Proposition 4.9 in \cite{CKS2002} shows that cobordism classes of virtual links are completely classified by pairwise virtual linking numbers. Any virtual string link can be mapped to an oriented virtual link with the same number of components by connecting the endpoints of each strand together without creating new crossings. This operation is called the \emph{closure}. It immediately follows from the main theorem that the closure on $vSL_n\mathcal B$ is an injection onto cobordism classes of virtual links with $n$ components.

\begin{lemma} \label{cobcom}
Forbidden moves can be realized by cobordisms.
\end{lemma}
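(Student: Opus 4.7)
The plan is to realize each forbidden move by an explicit local cobordism consisting of a birth, two oriented saddles, Reidemeister moves, and a death. By the symmetry between $f1$ and $f2$ (interchanging the roles of over- and under-crossings), it suffices to treat one of them, say $f1$.

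In Gauss-diagram terms, $f1$ locally commutes two adjacent arrow endpoints on a single strand; since these endpoints belong to arrows whose other endpoints lie elsewhere in the diagram, they cannot be reordered by Reidemeister moves alone. The remedy is to use saddle moves to locally rearrange the strand so that the two offending endpoints end up on topologically distinct arcs of an intermediate diagram, where they can be freely transposed, and then to undo the rearrangement.

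Concretely, inside a small disk containing the local configuration, first birth a small unknotted loop $U$ parallel to the strand carrying the two endpoints. Then perform an oriented saddle joining $U$ to the strand, positioned so that the arc between the two endpoints becomes part of a topologically separate arc of the resulting diagram; carry out the endpoint swap there using planar isotopy and Reidemeister moves, which is unobstructed because on distinct components the endpoints carry no relative order; then perform a second saddle to reconnect, and finish with a death to remove the residual loop. The combinatorial count on the affected strand is one birth, two saddles, and one death, giving local genus $(2-1-1)/2 = 0$. No other strand is touched, so the abstract cobording surface has exactly $n$ components, one per strand, as required; in fact the construction is a concordance.

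The main obstacle is to verify that the two saddles, suitably positioned and oriented, really do have the combined effect of commuting the two arrow endpoints without side effects elsewhere in the diagram, and that all orientations of the saddles are consistent so the cobording surface is oriented. This is most transparent on Gauss diagrams via the combinatorial description of the saddle move in Figure~\ref{saddle}: tracking how a saddle with a birthed loop splits and reconnects a strand, one checks that a suitable pair of saddles transports one endpoint past the other and restores the original strand structure up to exactly the forbidden-move swap.
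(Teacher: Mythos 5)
Your overall strategy---use saddles to move the two offending arrow endpoints onto an auxiliary piece, swap them there, and saddle back---is in the same spirit as the paper's proof, which uses exactly two saddle moves: the first splits off a small closed component carrying the two adjacent endpoints, and the second reattaches that circle to the strand at a different point of its cyclic order, so that the endpoints return in the opposite linear order. But your version has a genuine gap at precisely the step you defer. A saddle \emph{joining} the birthed loop $U$ to the strand merges two components into one, so it cannot make the arc between the two endpoints ``topologically separate''; and even on a genuinely separate closed component the two endpoints still carry a cyclic order together with the arcs, so they cannot be ``freely transposed'' by planar isotopy and Reidemeister moves---the reversal is achieved only by the choice of where the second saddle reattaches the circle. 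That choice is the entire content of the lemma, and your write-up acknowledges it remains unverified.

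More seriously, your claim that the construction is a concordance (genus $0$) cannot be repaired. If each forbidden move were realized by a genus-$0$ cobordism, then, since the forbidden moves unknot every virtual knot \cite{Ne2001}, every long virtual knot would be concordant to the trivial one; this contradicts the embedding $uSL_1\mathcal C\hookrightarrow vSL_1\mathcal C$ of \cite{BN2017} cited in the paper (and it would also make Theorem \ref{new} trivial rather than a statement about \emph{welded} concordance). The split-and-remerge pattern that actually effects the swap has $s=2$, $b=d=0$ on the affected component, hence genus $1$ by the paper's formula; a birth--merge--split--death pattern does have genus $0$, but merging a trivial birthed circle onto the strand only adds a removable detour and cannot reorder endpoints. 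Since the lemma claims only ``cobordism,'' genus $1$ is all that is needed: drop the concordance claim and give the explicit two-saddle picture (exile to a closed component, reattach with the order reversed).
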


\begin{proof}
Consider chords $c$ and $d$ in a Gauss diagram such that they have adjacent endpoints. Then, by using two saddle moves, these endpoints can be first exiled to some small closed component, and then reintroduced to their original location with the opposite order. Such a cobordism realizes both the first and the second forbidden move, and any other move which commutes arrows. 
\end{proof}

Additivity of the linking numbers under connected sum is immediate from its definition. This is where the third classification from Theorem 1 comes from. Let $\operatorname{LK_n}: uwSL_n\to \ZZ^{n(n-1)}$ be the map that takes an unwelded string link $L$ to the ordered list of its linking numbers, $(\lk(L_{(1)}, L_{(2)}), \lk(L_{(1)}, L_{(3)}), \ldots, \lk(L_{(n-1)}, L_{(n)})$.  In particular, there is standard form for any unwelded string link which displays exactly the crossings which contribute to $\lk(L_{(i)},L_{(j)})$ in lexicographic order on $ij$. This is illustrated in Figure \ref{standard}, with the sign of the crossing replaced by the signed number of parallel arrows with that sign. Conversely, given any list $w$ of $n(n-1)$ integers, there is a unique unwelded string $W$ such that $LK_n(W)=w$, using that standard form. Combining Lemmas \ref{firstlem} and \ref{cobcom}, this discussion also holds for $\operatorname{LK_n}: vSL_n \mathcal B \to \ZZ^{n(n-1)}$.

\begin{proposition}
The monoids $vSL_n\mathcal B$, $wSL_n\mathcal B$, and $uwSL_n\mathcal B$ are isomorphic for all $n\ge 1$, and $vSL_1\mathcal B$ has exactly one element.
\end{proposition}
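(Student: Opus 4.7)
The plan is to split the argument into two parts: first, to identify the three monoids with each other, and second, to collapse the $n=1$ case. For part one, by construction there are natural surjective monoid homomorphisms $vSL_n\mathcal B \twoheadrightarrow wSL_n\mathcal B \twoheadrightarrow uwSL_n\mathcal B$; the underlying set of diagrams is the same, the monoid operation is concatenation in all three, and each arrow merely adds a forbidden move ($f1$, then $f2$) as an additional relation. The task is therefore to show these surjections are injective.

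The key tool is Lemma \ref{cobcom}, which tells us that $f1$ and $f2$ are each realized by a sequence of cobordism moves. Consequently, any equivalence of diagrams that uses $f1$ or $f2$ inside $wSL_n\mathcal B$ or $uwSL_n\mathcal B$ can be rewritten inside $vSL_n\mathcal B$ by replacing each forbidden-move step with its two-saddle cobordism surrogate. Thus the additional generators of the quotient relations are already implied by the cobordism relation on $vSL_n\mathcal B$, the three equivalence relations coincide, and the surjections are monoid isomorphisms.

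For the second part, when $n=1$ one has $n(n-1)=0$, so $\operatorname{LK}_1 : vSL_1\mathcal B \to \ZZ^{0}$ takes values in the trivial group. The discussion immediately preceding the proposition shows that $\operatorname{LK}_n : vSL_n\mathcal B \to \ZZ^{n(n-1)}$ is a bijection — existence of a preimage for every tuple is witnessed by the standard form of Figure \ref{standard}, and uniqueness is inherited from the unwelded classification via Lemmas \ref{firstlem} and \ref{cobcom}. Specializing to $n=1$ forces $|vSL_1\mathcal B|=1$.

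The genuine content of this proposition is concentrated in Lemma \ref{cobcom}; once that is available, everything else is bookkeeping about which equivalence relations coincide. The only step one should be cautious about is that uniqueness in the standard form, which is needed for the $n=1$ conclusion, rests on the unwelded-classification input attributed to \cite{Ne2001} and proved in Section \ref{sam}, so any circularity with the main theorem should be checked.
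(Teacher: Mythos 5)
Your first half is correct and is essentially the paper's own argument: all three monoids are quotients of the same set of diagrams under concatenation, and Lemma \ref{cobcom} shows that $f1$ and $f2$ are already consequences of the cobordism relation, so the three equivalence relations coincide. (The paper packages this as a cycle of surjections $vSL_n\mathcal B \twoheadrightarrow wSL_n\mathcal B \twoheadrightarrow uwSL_n\mathcal B \twoheadrightarrow vSL_n\mathcal B$ whose composite is the identity; your formulation is equivalent.)

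The second half has a genuine gap, which you partly flagged yourself. For $n=1$, injectivity of $\operatorname{LK}_1 : vSL_1\mathcal B \to \ZZ^{0}$ \emph{is} the statement that $vSL_1\mathcal B$ has one element, so invoking the bijectivity of $\operatorname{LK}_n$ begs the question. The discussion preceding the proposition only establishes surjectivity (the standard form realizes any tuple of linking numbers); the claim that every diagram can be brought to standard form under cobordism --- i.e.\ injectivity --- is exactly the content of Section \ref{proof}, proved after the proposition, so your argument reduces the claim to a strictly stronger, later result rather than proving it. The paper instead argues directly: in a one-component Gauss diagram every arrow is a self-arrow; a saddle move exiles the head and foot of a given arrow to a small closed component where they are adjacent, the arrow is erased by a first Reidemeister move, and a second saddle reconnects the closed component to the long strand so the cobordism stays connected. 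Repeating for each arrow gives a cobordism to the empty diagram. You could also repair your version by using only Lemma \ref{cobcom} to commute endpoints until each self-arrow is isolated and then cancelling it with $RM1$ --- but some such explicit step is needed; the appeal to $\operatorname{LK}_n$ being a bijection cannot carry the weight here.
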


\begin{proof}
First notice that $vSL_n\mathcal B \twoheadrightarrow wSL_n\mathcal B\twoheadrightarrow uwSL_n\mathcal B $, since each monoid is obtained from the previous one by allowing one more move. By Lemma \ref{cobcom}, $ uwSL_n\mathcal B \twoheadrightarrow vSL_n\mathcal B$, and thus all those maps are isomorphisms. 

For the second part of the statement, it suffices to show that any arrow in a one-component string link Gauss diagram $D$ can be erased using a cobordism and classical  Reidemeister moves. 

This is done by creating a saddle parallel to the crossing such that its head and foot are adjacent. Then, it can be removed using a single $RM1$ and the saddled off component can be reconnected to its original component of the link with a saddle more. Since $D$ has only one component, every arrow can be canceled that way, and $D$ is cobordant to the empty Gauss diagram on one long component. Thus, $vSL_1\mathcal B$ is isomorphic to $\{1\}$. \end{proof}

\section{Proof of the theorem} \label{proof}
The main step is to show that the endpoints of any two adjacent chords on the Gauss diagram can be commuted using unwelded equivalences or using cobordisms. The proof could equivalently be illustrated with planar diagrams, but the simplicity of the standard form can be lost in the sea of virtual crossings that is required to realize it. 

\subsection*{Standard form with cobordisms}
Let $D$ be a virtual string link diagram. By Lemma \ref{cobcom} the endpoints on each strand commute with each other. Thus, self-crossings can be isolated and removed, while the rest of the diagram can be organized to be in standard form, by canceling parallel arrows with opposite signs as needed.

\subsection*{Standard form with forbidden moves}  \label{sam}

This argument is a generalization of the proof that forbidden moves unknot virtual knots as it appears in \cite{Ne2001}. The first and second forbidden moves on planar diagrams admit many orientations which give all possible choices of signs to the pairs of chords depicted in Figure \ref{given}. Thus, any two adjacent arrowheads or arrowfeet on a component of a string link Gauss diagram can commute.

\begin{figure}[htbp] \centering
\centering\includegraphics[scale=0.75]{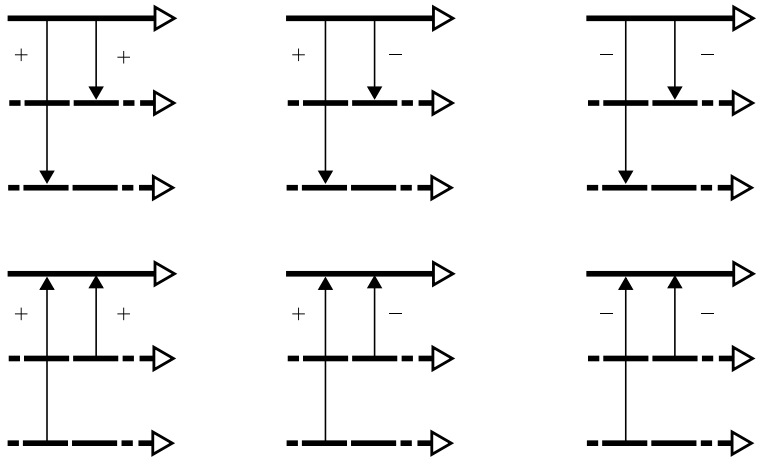}  
\caption{Pairs of crossings which commute with forbidden moves.} \label{given}
\end{figure}

There are four different choices of signs that can occur in this situation. Two of them are depicted in Figure \ref{forbidden-commute}. The other cases can be obtained from these by applying various symmetries to the diagrams and changes of orientation of the strings. 
\hfill

\begin{figure}[htbp]\centering
\centering\includegraphics[scale=0.75]{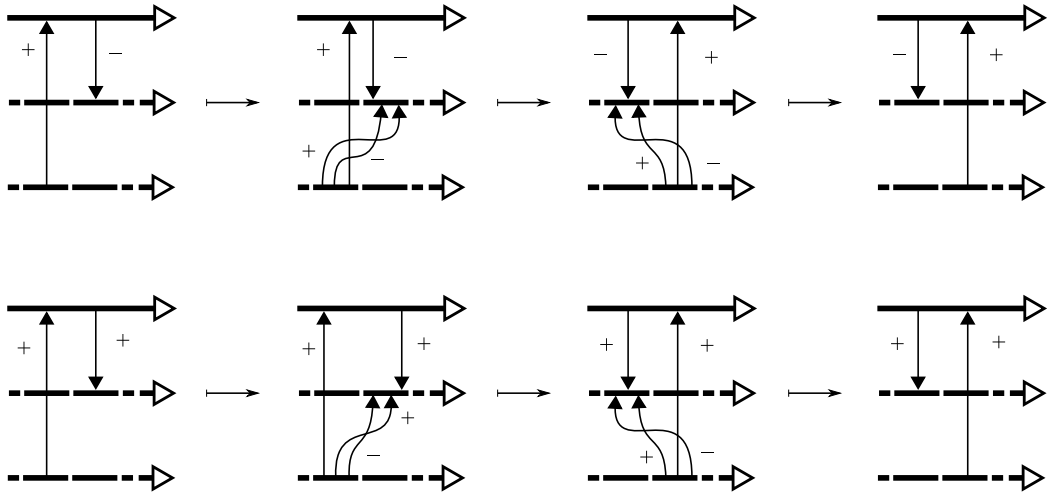}  
\caption{Commuting crossings using both forbidden moves.}    \label{forbidden-commute}
\end{figure}

It follows that the order of arrows on each component is irrelevant to the unwelded string link represented by a Gauss diagram. As with cobordisms, placing any self-crossing as an isolated crossing allows them to be canceled and the rest of the link can be put in standard form, which is uniquely determined by the $n(n-1)$ pairwise virtual linking numbers.

\section{Welded knot concordance} \label{welded}
 
As an attempt to reach a midpoint between unwelded equivalence and cobordism, say that two virtual string link diagrams are \emph{welded concordant} if one can be obtained from the other by a sequence of generalized Reidemeister moves, first forbidden moves, and genus 0 cobordisms. The welded moves are allowed to happen at any point of the cobordism.

The Tube map was defined on virtual knot diagrams by Satoh in \cite{Sa2000}, and gives a topological setting in which to interpret the first forbidden move, which is then more accurately called the \emph{overcrossings commute} move, by mapping a planar knot diagram to a ribbon knotted torus in four dimensional space. Consider the Tube of each diagram appearing in a concordance movie between welded knots. The birth and death of unknotted components correspond respectively to creating and filling a ribbon (un)knotted torus. A proposed geometric realization of the saddle move is seen in Figure \ref{saddletube}. The Tube map of welded string links is defined in \cite{ABMW2017} while a concordance theory for ribbon knotted surfaces which agrees with welded concordance has yet to be studied.

\hfill

\begin{figure}[htbp] \centering
\centering\includegraphics[scale=0.9]{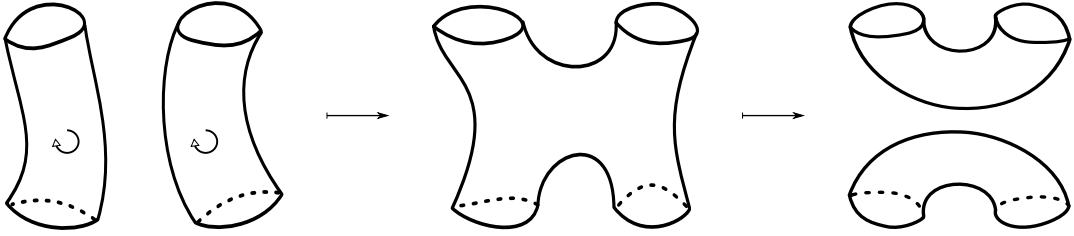}  
\caption{The saddle move realized on a ribbon knotted surface.} \label{saddletube}
\end{figure}

In the case of one component string links, Theorem \ref{new} shows that allowing the first forbidden move trivializes the virtual knot concordance group, which is surprising, considering the well-known results that classical knots inject into welded knots.

\begin{theorem}\label{new}
Any long welded knot is concordant to the unknot. 
\end{theorem}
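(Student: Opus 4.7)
The plan is to reduce a long welded knot chord by chord to the empty Gauss diagram using genus-zero building blocks, thereby exhibiting it as welded-concordant to the unknot.

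The basic operation I would use is a \emph{band move}: a birth of a small unknot followed by a single saddle that joins that unknot to the main component. Since only the one component is ever involved, a band move contributes $(1 - 1 + 0)/2 = 0$ to the genus formula, and therefore any sequence of band moves interleaved with Reidemeister moves and $f1$ moves is automatically a welded concordance. This is in contrast with the two-saddle trivialization used in the preceding proposition, which costs one unit of genus per chord removed and so does not assemble into a concordance.

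The heart of the proof is to show that a single chord $c$, with foot $f$ and head $h$, can be eliminated by one such band move together with welded equivalences. First, I would apply $f1$ to permute the arrowheads along the strand so that $h$ is placed next to a region where an auxiliary loop can be safely attached. Then, I would birth a small unknot in that region and saddle it to the strand so that the resulting loop effectively carries both endpoints of $c$ as a short kink, which is removed by $RM1$ (possibly after one further application of $f1$ to push surrounding arrowheads out of the way). Iterating over the finite list of chords should produce the desired welded concordance from $K$ to the long unknot.

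The main obstacle I anticipate is carrying out the band step in full generality. Since $f1$ does not move arrowfeet, the pattern of arrowfeet along the strand is rigid and the feet of other chords can obstruct a direct band to the position of $h$. Resolving this likely requires inserting canceling pairs of arrows via $RM2$ to create room for the band, then removing those pairs after the band is installed, and verifying throughout that every step contributes zero to the genus. Ensuring that the induction terminates (the number of chords strictly decreases) while simultaneously preserving the genus-zero condition is the main technical task.
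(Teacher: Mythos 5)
There is a genuine gap, and it sits at the very center of your argument: the ``band move'' you propose is invisible to the welded equivalence class. Birthing a crossing-free circle and merging it into the long component by a single saddle only inserts an empty detour into the strand; the resulting Gauss diagram is Reidemeister-equivalent (indeed essentially planar-isotopic) to the one you started with. Every other tool in your kit --- $f1$, $RM1$, $RM2$ insertions --- is likewise a welded equivalence. Hence any composite of these operations leaves the welded isotopy class of the long knot unchanged, and since classical long knots inject into welded long knots, a nontrivial example such as the long trefoil can never be carried to the trivial diagram this way. The obstacle you correctly flag at the end (one type of chord endpoint is rigid under $f1$ --- note your head/foot convention is reversed relative to the paper's, where $f1$ commutes the \emph{over}-endpoints, i.e.\ the arrowtails) is real, but inserting and later removing $RM2$ pairs cannot overcome it for the same reason: it never leaves the welded class. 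The genus-zero package that actually changes the welded type is not birth-then-merge but \emph{split-then-death}: a saddle that separates an arc of the long component onto its own closed circle, which is capped off by a death once it has been emptied of chord endpoints. (Splitting off a circle and then capping it with a disk is topologically a cylinder, so each such pair contributes nothing to the genus; your observation that the two-saddle trick of the preceding proposition costs one unit of genus per chord is correct.)

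This is exactly what the paper's proof does, and it handles all $k$ chords simultaneously rather than inductively. For each chord $c$, an $RM1$ kink introduces a canceling chord $c'$ of opposite sign whose head sits next to the head of $c$; a single splitting saddle then exiles that adjacent pair of heads onto its own closed component (Figure \ref{newfig}). After doing this for every chord, the long component carries only over-endpoints, and these all commute with one another by $f1$; so the feet of $c$ and $c'$ can be brought together, $RM2$ cancels the pair, and the now-empty closed component is removed by a death. The total is $k$ saddles and $k$ deaths with no births, a genus-zero cobordism to the long unknot. If you want to preserve your chord-by-chord structure, you must replace your band move with this kink--saddle--commute--cancel--death package; the indispensable features are that the saddle \emph{splits} rather than merges, and that it is paired with a death of a component that has been emptied by genuinely welded moves.
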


\begin{proof}
Again, this is shown by putting an arbitrary Gauss diagram $D_0$ with $k$ crossings into the standard form using the allowed moves. 

Let $D_1$ be the diagram obtained by adding an isolated crossing of opposite sign, pointing near the arrowhead of every crossing of $D_0$. This is shown in Figure \ref{newfig}. Then, using saddle moves, each pair of arrowheads can be isolated to its own closed component. The resulting diagram is $D_2$. Since the long component of $D_2$ contains only overcrossings and this is a welded link, they can be commuted such that the pairs of crossings have adjacent feet. This is diagram $D_3$ of Figure \ref{newfig}.

\hfill

\begin{figure}[htbp] \centering
\centering\includegraphics[scale=0.7]{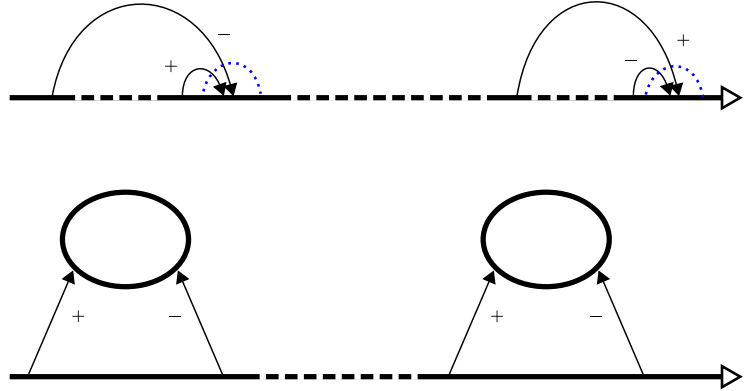}  
\caption{Gauss diagram where the long component contains only undercrossings, and the $n$ closed components each have one positive and one negative overcrossing.} \label{newfig}
\end{figure}
 The second Reidemeister move allows to cancel each of the pairs, and finally, deaths delete the closed components. Since there were $k$ saddle moves and the same number of deaths, this is a genus 0 cobordism to the unknot. 
\end{proof}

For completeness, let's mention that as for classical knots, the concordance inverse of a welded string link is obtained by taking mirror images. Proposition \ref{invprop} should be self-evident and it is presented here using Gauss diagram language. 

\begin{proposition} \label{invprop} Let $S$ be a string link Gauss diagram with $n$ components.
Let $-S$ denote the diagram obtained by changing the direction of each core component and the sign of each arrow. Then, $S\#-S=-S\#S=U_n\in wSL_n\mathcal C$. 
\end{proposition}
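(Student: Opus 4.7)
The proof I would write mirrors the classical slice disk for the connected sum of a long knot with its reverse-mirror. In the Gauss diagram of $S\#-S$, each arrow $a$ of $S$ has a partner $a'$ of $-S$ that carries the opposite sign and sits in the mirror position across the $\#$-line, and on every strand the $S$-endpoints are followed by the $-S$-endpoints in reverse order, producing a palindromic arrangement.

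My plan is to cancel these partner pairs one at a time by Reidemeister II, working from the innermost pair (closest to the $\#$-line) outward. By the palindromic symmetry, the tails of the innermost pair $(a_k,a'_k)$ are adjacent on one strand and the heads are adjacent on another, and their signs are opposite, so RM2 applies directly in the non-interleaved case and immediately removes both arrows while preserving the palindromic structure for the next step. In the interleaved case I would first use the first forbidden move, available in welded concordance, to commute the obstructing overcrossings out of the way, and if this is not enough, insert a balanced birth--saddle--death package around the obstruction so that the genus-zero count $s_i-b_i+d_i=0$ is preserved on every strand. After $k$ iterations the diagram is $U_n$, and the identity $-S\#S=U_n$ follows by the symmetric argument with the roles of the two factors exchanged.

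The main obstacle is handling the interleaved case, where the two endpoints of a partner pair may not be simultaneously adjacent even though one pair is. What I expect will make the plan go through is that the palindromic symmetry of $S\#-S$ forces the permutation of tails on one strand to mirror the permutation of heads on the other, so that the welded commutation of overcrossings on one side is matched by the head rearrangement that would otherwise be forbidden on the other. Making this matching precise, and verifying that any supplementary saddles come in balanced birth-and-death pairs that preserve genus zero on each component (rather than merely genus-one cobordism as in Lemma \ref{cobcom}), is where the work of the proof lies.
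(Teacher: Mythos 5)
Your overall strategy --- cancel the mirror pairs of $S\#-S$ by Reidemeister II, working outward from the $\#$-line --- is the same as the paper's, and you correctly identify that only one of the two endpoint pairs of the innermost mirror pair is guaranteed to be adjacent. The gap is in your mechanism for the interleaved case. The first forbidden move only commutes adjacent \emph{over}-endpoints (arrow tails), so it cannot clear obstructions lying between the two under-endpoints (arrow heads); and the fallback ``balanced birth--saddle--death package'' is not a construction that exists: a death can only remove an unknotted, arrow-free component, so nothing that still carries the obstructing endpoints can be capped off, and a genus-$0$ package that commuted two arbitrary adjacent endpoints would, applied repeatedly, make every long virtual knot concordant to the unknot, contradicting the nontriviality of $vSL_1\mathcal C$ established in \cite{BN2017}. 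Commuting endpoints genuinely costs genus, exactly as in Lemma \ref{cobcom}.

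The paper sidesteps commutation entirely. For the innermost mirror pair $\pm c_1$, a \emph{single} saddle joins the two arcs lying just outside the two far endpoints; on the round component this splits off, those two endpoints become adjacent (the material between them --- itself a palindrome --- is simply carried along to be handled at later stages), so RM2 applies at once. Iterating empties the round components of arrows, and only then are they removed by deaths. Each saddle is thus matched by exactly one death and there are no births, which gives genus $0$ on every strand by the same count used in the proof of Theorem \ref{new}. Note also that this argument never invokes the forbidden move, which is why the paper can conclude that $vSL_n\mathcal C$, and not merely $wSL_n\mathcal C$, is a group; your reliance on $f1$ would weaken the conclusion to the welded setting even if the interleaved case were repaired.
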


\begin{proof}
Since $-(-S)=S$, it suffices to prove the second equality. Enumerate the chords of $S$ as $c_1, c_2, \ldots, c_k$ such that the first crossing in the diagram is from $c_1$, the one after that from $c_2$, and so on. Denote by $-c_i$ the mirror image of $c_i$ in $-S$. Then, the innermost pair of crossings is $(-c_1,c_1)$. Using a saddle move which connects the arcs on the outside of the far endpoints of $\pm c_1$ to each other, that pair of crossings can be canceled using a second Reidemeister move. Figure \ref{inverse} shows how crossings pair up with their inverses in the Gauss diagram.

Repeat this as needed (at most $k-1$ times) creating round components, and removing them with deaths as needed until the diagram is empty. 
\end{proof}

As a corollary, $vSL_n\mathcal C$ is also a group for any $n$.

\begin{acknowledgments} 
This work would not have been possible without the feedback and encouragement from Dror Bar-Natan and Hans Boden. The result in this paper was first presented at the Knots in Washington, and the author is grateful to the organizers for creating such a motivating and creative atmosphere. This research was supported by funding from the National Science Research Council of Canada. 
\end{acknowledgments}


\end{document}